\newtheorem{theo}{Theorem}[section]
\newtheorem{lemma}{Lemma}[section]
\newtheorem{corollary}{Corollary}[section]
\newtheorem{example}{Example}
\def \isnatural {\in\mathbb{N}}
\def \isreal {\in\mathbb{R}}
\def \iscomplex {\in\mathbb{C}}
\def \disc{\mathbb{D}}
\def \hplane{\mathbb{H}}
\newcommand{\tef}{transcendental entire function}
\newcommand{\ts}{transcendental singularity}
\newcommand{\tss}{transcendental singularities}
\newcommand{\dlts}{logarithmic singularity}
\newcommand{\dltss}{logarithmic singularities}
\newcommand{\dnlts}{direct non-logarithmic singularity}
\newcommand{\dnltss}{direct non-logarithmic singularities}
\newcommand{\its}{indirect singularity}
\newcommand{\itss}{indirect singularities}
\newcommand{\dts}{direct singularity}
\newcommand{\dtss}{direct singularities}
\newcommand\qfor{\quad\text{for }}
\begin{document}
\title[A new characterisation of the Eremenko-Lyubich Class]{A new characterisation of the Eremenko-Lyubich class}
\author{D. J. Sixsmith}
\address{Department of Mathematics and Statistics \\
	 The Open University \\
   Walton Hall\\
   Milton Keynes MK7 6AA\\
   UK}
\email{david.sixsmith@open.ac.uk}
%
%
%
%
\begin{abstract}
The Eremenko-Lyubich class of {\tef}s with a bounded set of singular values has been much studied. We give a new characterisation of this class of functions. We also give a new result regarding direct singularities which are not logarithmic.
\end{abstract}
\maketitle
%
%
%
%
\section{Introduction}
Throughout this paper we assume that $f$ is a {\tef}. We say that $f$ belongs to the Eremenko-Lyubich class, $\mathcal{B}$, if the set of finite critical and asymptotic values of $f$ is bounded. This set coincides with the set of singular values of the inverse function $f^{-1}$. The class $\mathcal{B}$ was introduced to complex dynamics in \cite{MR1196102}, and has been widely studied; see, for example, the papers \cite{MR2346947}, \cite{MR2570071}, \cite{MR1610785} and \cite{MR2753600} on the structure of the escaping set for functions in this class; \cite{MR2480096},  \cite{MR2587450} and \cite{MR1357062} on the dimensions of the Julia set and the escaping set; and \cite{Bish1} for an example of a function in this class with a wandering domain. Papers studying the value distribution of functions in this class include \cite{MR2081677}, \cite{MR1601855} and \cite{MR2478274}. 

An important property of functions in the Eremenko-Lyubich class is that they are expanding, in the following sense. Define $D_R = \{ z : |f(z)| > R \}$, for $R>0$. If $f\in\mathcal{B}$, then it follows easily from \cite[Lemma 1]{MR1196102} that there is a constant $R_0>0$ such that 
\begin{equation}
\label{expeq}
\left|z\frac{f'(z)}{f(z)}\right| \geq \frac{1}{4\pi}(\log|f(z)| - \log R_0), \qfor z \in D_{R_0}.
\end{equation}
This property has many applications in complex dynamics and value distribution theory; for example, it was used in \cite{MR1196102} to show that functions in the Eremenko-Lyubich class cannot have escaping Fatou components.

Define  
\begin{equation}
\label{def_of_s}
\eta_f = \lim_{R\rightarrow\infty} \inf_{z \in D_R} \left|z \frac{f'(z)}{f(z)}\right|.
\end{equation}
It follows from (\ref{expeq}) that if $f$ is a {\tef} in the Eremenko-Lyubich class, then $\eta_f~=~\infty$. The main result of this paper is the following, which shows that this property has a strong converse.
\begin{theo}
\label{theo1}
Suppose that $f$ is a {\tef}. Then, either $\eta_f = \infty$ and $f\in\mathcal{B}$, or $\eta_f = 0$ and $f\notin\mathcal{B}$.
\end{theo}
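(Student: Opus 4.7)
The forward implication $f\in\mathcal{B}\Rightarrow\eta_f=\infty$ is immediate from \eqref{expeq}, so it suffices to show that $f\notin\mathcal{B}$ forces $\eta_f=0$. I would argue the contrapositive: from $\eta_f>0$, deduce $f\in\mathcal{B}$. Fix $c>0$ and $R_0>0$ with $|zf'(z)/f(z)|\ge c$ on $D_{R_0}$; since this quantity vanishes at any critical point of $f$, $D_{R_0}$ contains no critical points, and every critical value has modulus at most $R_0$. It remains only to show that the finite asymptotic values are bounded.

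Suppose, for contradiction, that $a$ is a finite asymptotic value with $|a|>4R_0$, and let $U$ be the unbounded connected component of $f^{-1}(D(a,R_0))$ containing a tail of the associated asymptotic path. Since $D(a,R_0)$ is disjoint from the closed disk containing the critical values, the restriction $f|_U$ is a local biholomorphism onto $D(a,R_0)$; in the logarithmic case, standard theory of singularities provides a conformal isomorphism $\psi:\mathbb{H}\to U$ on $\mathbb{H}=\{w:\Re w>0\}$ such that $f(\psi(w))=a+R_0 e^{-w}$ and $\psi(w)\to\infty$ as $\Re w\to\infty$.

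The plan is to show that the assumed lower bound $|zf'(z)/f(z)|\ge c$ on $U$ forces $\psi$ to grow too slowly along $\mathbb{H}$ to reach $\infty$. Differentiating $f\circ\psi(w)=a+R_0 e^{-w}$ and inserting the hypothesis with $z=\psi(w)$, a short calculation should yield an estimate of the shape $|\psi'(w)/\psi(w)|\le K e^{-\Re w}$ for $w\in\mathbb{H}$, where $K$ depends on $c$, $R_0$, and $|a|$. Integrating this along a horizontal ray $\{x+iy_0:x>0\}$ then shows that $\log\psi(x+iy_0)$ is Cauchy as $x\to\infty$, so $|\psi(x+iy_0)|$ remains bounded — contradicting $\psi(w)\to\infty$ as $\Re w\to\infty$.

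The main obstacle I foresee is removing the simplifying assumption that the singularity over $a$ is logarithmic: in the direct non-logarithmic or indirect cases the clean identification $f\circ\psi(w)=a+R_0 e^{-w}$ is unavailable in that form, so one would need either to pass to a suitable sub-tract of $U$ on which a logarithmic structure is restored, or to perturb $a$ to a nearby regular value of comparable modulus and run a limiting argument; this appears to be the only genuinely non-routine step in the proof.
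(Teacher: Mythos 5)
Your handling of the logarithmic case is correct and coincides in substance with the paper's Lemma~\ref{lem1}: differentiating $f\circ\psi(w) = a + R_0 e^{-w}$, inserting the lower bound $|zf'(z)/f(z)|\ge c$ at $z = \psi(w)$, and using $|a + R_0 e^{-w}| \ge |a| - R_0 \ge 3R_0$ yields $|\psi'(w)/\psi(w)| \le e^{-\Re w}/(3c)$; integrating along a horizontal ray then gives the contradiction. The paper parametrises by the left half-plane and integrates $|\tfrac{d}{dt}L(h(t))|$ along $\phi(\Gamma')=(-\infty,x_0)$, which is the same computation. The critical-value bound is likewise fine.

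The gap is exactly the step you flag as ``the only genuinely non-routine one,'' and it is not a matter of routine bookkeeping: it is the bulk of the paper, and neither of your two suggested fixes closes it. For an \emph{indirect} singularity over $a$, the map $f$ takes the value $a$ infinitely often in every $U(r)$, so $f$ restricted to a tract is not a covering of any punctured disc about $a$ and no sub-tract restores a logarithmic structure. The paper instead invokes a theorem of Bergweiler and Eremenko (Theorem~\ref{BergErem}), which, assuming $a$ is not a limit of critical values (guaranteed by your critical-value bound), produces a sequence of asymptotic values $a_n\to a$ together with disjoint unbounded simply connected domains $U_n$ on which $f$ is \emph{univalent} onto a disc having $a_n$ on its boundary; a branch of $\log(f-a_n)$ on $U_n$ then exhibits exactly the structure \eqref{newfeq} required by the lemma. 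That theorem is itself nontrivial (its proof rests on the Gross star theorem). For a \emph{direct non-logarithmic} singularity, your idea of perturbing $a$ to a nearby regular value stalls: a regular value is not in general an asymptotic value, so there is no asymptotic path along which to run the integral estimate, and no limiting procedure is supplied. The paper's resolution is the new Theorem~\ref{theo2}: if $a$ is not a limit of critical values, every neighbourhood of a direct non-logarithmic singularity over $a$ contains a neighbourhood of a logarithmic or indirect singularity whose projection is close to, but distinct from, $a$. This reduces the third case to the first two. Its proof needs a careful nested-tract construction together with Heins's valence theorem, and it is the paper's main technical contribution; without a result of comparable strength your argument cannot be completed.
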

It is clear that if $f$ has an unbounded set of critical values, then $\eta_f=0$. Thus the proof of Theorem~\ref{theo1} requires detailed analysis of the behaviour of functions with an unbounded set of asymptotic values. Since every asymptotic value of $f$ gives rise to a {\ts} of $f^{-1}$, we need a number of results on singularities of the inverse function. In particular we require the following result on the density of {\tss} of a certain type, which may be of independent interest. Definitions of terms used in the statement of this theorem are given in Section~\ref{S1}.
\begin{theo}
\label{theo2}
Suppose that $f$ is a {\tef}, with a {\dnlts} with projection $a\in\widehat{\mathbb{C}}$. Then at least one of the following holds:
\begin{enumerate}[(i)]
\item $a$ is the limit of critical values of $f$;
\item every neighbourhood of this singularity contains a neighbourhood of another {\ts} of $f^{-1}$ that is either indirect or logarithmic, and whose projection is different from $a$.
\end{enumerate}
\end{theo}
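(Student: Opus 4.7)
\emph{Proof proposal.} The plan is to argue by contradiction: assume both (i) and (ii) fail, and deduce that the singularity must actually be logarithmic, contradicting the hypothesis. Failure of (i) lets me choose $r>0$ so that $D^{*}(a,r) := D(a,r)\setminus\{a\}$ contains no critical values of $f$; directness, after further shrinking $r$, gives $f\neq a$ on the tract $U_{r}$ of the singularity. Hence $f|_{U_{r}}:U_{r}\to D^{*}(a,r)$ is a surjective local biholomorphism, since any critical point in $U_{r}$ would map to a critical value lying in $D^{*}(a,r)$. Failure of (ii), after a further shrinking of $r$, ensures that $U_{r}$ contains no tract of any indirect or logarithmic singularity of $f^{-1}$ whose projection is different from $a$.

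The central step is to show that $f|_{U_{r}}$ is a covering map. I will lift an arbitrary path $\gamma$ in $D^{*}(a,r)$ whose image is compactly contained, starting from any preimage in $U_{r}$. The local biholomorphism property extends the lift maximally; if it is strictly shorter than $\gamma$, continuity and connectedness rule out accumulation of the lift at any point of $U_{r}$ (which would permit further extension) or at any finite boundary point (where $|f-a|=r$ by continuity, incompatible with $\gamma \subset D^{*}(a,r)$), so the lift must escape to $\infty$. Such an escape produces an asymptotic path of $f$ inside $U_{r}$ over a value $b\neq a$, hence a transcendental singularity of $f^{-1}$ over $b$ whose tract sits inside $U_{r}$. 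By failure of (ii) this new singularity must itself be direct and non-logarithmic; since $D(b,s)\subset D^{*}(a,r)$ for small $s$, it inherits both the absence of nearby critical values and the absence of nearby indirect or logarithmic singularities with different projection, so the argument iterates.

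Once $f|_{U_{r}}$ is established as a covering, non-logarithmicity forces it to be of some finite degree $n$. A finite-degree unbranched cover of $D^{*}(a,r)$ by a planar domain forces $U_{r}$ to be conformally equivalent to $\{|w|>1\}$, and the uniformising biholomorphism $\phi:\{|w|>1\}\to U_{r}$ extends meromorphically across $w=\infty$ (Riemann removability applied to $1/\phi$); by the open mapping theorem $U_{r}\cup\{\infty\}$ is then open in $\widehat{\mathbb{C}}$, so $U_{r}$ contains a neighbourhood of $\infty$ in $\mathbb{C}$. Since $|f-a|<r$ on $U_{r}$, Liouville's theorem forces $f$ to be a polynomial, contradicting transcendentality. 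The main obstacle I foresee is closing the iteration of direct non-logarithmic singularities: for $f$ of finite order this is immediate from the Denjoy--Carleman--Ahlfors bound $\max(1,2\rho)$ on the number of direct singularities of $f^{-1}$; in the general case I anticipate the need for an accumulation argument showing that an infinite cascade of direct non-logarithmic singularities with pairwise distinct projections inside $U_{r}$ must yield either a limit of critical values of $f$ (reviving (i)) or an indirect or logarithmic singularity of $f^{-1}$ inside $U_{r}$ (reviving (ii)).
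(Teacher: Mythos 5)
Your overall strategy is sound up to a point, and it actually parallels the paper's: assuming (i) and (ii) fail, each failure produces a further direct non-logarithmic singularity, with projection different from $a$, inside the given tract, and one must then extract a contradiction from the resulting infinite cascade. (The paper generates the cascade from the classification of unbranched coverings of the punctured disc --- if there were no nearby singularity with different projection and no nearby critical values, the restriction of $f$ would be a covering of $B(a,r)\setminus\{a\}$, hence either universal, making the singularity logarithmic, or of finite degree, which is excluded; this is the same endgame as your universal/finite-degree dichotomy.) The genuine gap is the step you yourself flag: closing the cascade. The Denjoy--Carleman--Ahlfors bound disposes only of finite order; for infinite order a transcendental entire function can have infinitely many direct singularities, so ``an infinite cascade of direct non-logarithmic singularities with distinct projections'' is not by itself contradictory, and the ``accumulation argument'' you anticipate (that the cascade revives (i) or (ii)) is neither supplied nor, in fact, the way the contradiction arises.

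What is needed, and what the paper does, is a quantitatively controlled construction plus a valence theorem. One chooses nested tracts $W_{n+1}(r_{n+1})\subset W_n(r_n)$ with $\overline{B(a_{n+1},r_{n+1})}\subset B(a_n,r_n)$, $r_n\to 0$, using the directness of each new singularity to arrange that $f(z)=a_n$ has no solutions in $W_n(r_n)$. The projections then converge to some $a'\neq a$; a curve threading the nested tracts is an asymptotic curve with value $a'$, so $a'$ is itself the projection of a transcendental singularity, by assumption again direct, with a tract $W$ on which $f$ omits $a'$. Picking $N$ with $W_N(r_N)\subset W$, $a'\in B(a_N,r_N)$ and $a'\neq a_N$, the function $f$ omits the two distinct values $a_N$ and $a'$ on the component $W_N(r_N)$ of $f^{-1}(B(a_N,r_N))$, contradicting Heins's theorem that the valence on such a component is either constant and finite or infinite at all but one point. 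None of this machinery (the value-omission bookkeeping, the convergence of the projections, and a Heins-type valence theorem applied at the limit) appears in your proposal, and without it the argument does not close. A secondary, repairable point: in your finite-degree case, a finite unbranched cover of $B(a,r)\setminus\{a\}$ makes $U_r$ conformally a punctured disc, but the puncture may correspond to a finite point of $\mathbb{C}$ rather than to $\infty$; that case must be excluded by observing it would make the singularity algebraic, contradicting $\bigcap_{r>0}U(r)=\emptyset$, before you may conclude that $U_r$ contains a neighbourhood of $\infty$ and invoke Liouville.
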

We observe that Theorem~\ref{theo2} is complementary to the following result of Bergweiler and Eremenko \cite[Theorem 5]{MR2507243}, which has the same hypothesis although in this result the projection of the {\ts} must be finite.
\begin{theo}
\label{OtherBergErem}
Suppose that $f$ is a {\tef}, with a {\dnlts} with projection $a\iscomplex$. Then every neighbourhood of this singularity is also a neighbourhood of other {\dtss} of $f^{-1}$ with projection~$a$.
\end{theo}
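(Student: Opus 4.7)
The plan is to show that inside any tract of the given direct non-logarithmic singularity over $a$, preimages of sufficiently small disks around $a$ necessarily split into several unbounded components, each harbouring a new direct singularity. Fix a tract $V = V_{r_0}$ with $a \notin f(V)$, and let $r < r_0$. For any $s < r$ I first establish what I call the \emph{no-bounded-components principle}: every connected component of $V_r \cap f^{-1}(D(a,s))$ is unbounded. Indeed, a bounded component $W$ would give a proper holomorphic map $f|_W \colon W \to D(a,s)$, which is therefore surjective, forcing $a \in f(W) \subset f(V)$ and contradicting directness.

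The crux is to prove that for some $s < r$ the set $V_r \cap f^{-1}(D(a,s))$ has at least two (necessarily unbounded) components. I would argue the contrapositive: suppose the nested tract $V_s$ is the unique component for every $s \in (0,r)$. Then $f|_{V_r}$ omits $a$, so it lifts along the universal covering $\pi \colon \{w : \operatorname{Re} w < \log r_0\} \to D(a,r_0) \setminus \{a\}$, $w \mapsto a + e^w$, to a holomorphic $\tilde f \colon V_r \to \{w : \operatorname{Re} w < \log r\}$, provided $V_r$ is simply connected (a fact I would derive from the uniqueness of components together with the no-bounded-components principle). Uniqueness of unbounded preimage components at every scale then forbids both critical points of $\tilde f$ (around which preimages would split at scales just below the corresponding critical values) and non-injectivity of $\tilde f$ (distinct sheets would produce distinct unbounded preimage components), so $\tilde f$ is a conformal isomorphism and $f|_{V_r}$ is itself a universal covering, making the singularity logarithmic and contradicting the hypothesis.

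Given that multiple unbounded components exist at some scale $s$, let $W \neq V_s$ be one of them. Applying the no-bounded-components principle inductively inside $W$, I produce a nested sequence $W \supset W_1 \supset W_2 \supset \cdots$ of unbounded components of $W \cap f^{-1}(D(a,s_n))$ with $s_n \to 0$. This nested family defines a transcendental singularity of $f^{-1}$ over $a$; it is direct since $a \notin f(W) \subset f(V)$, and its tracts lie in the prescribed neighbourhood $V_{r_0}$ of the original singularity.

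The main obstacle is the crux step: translating uniqueness of unbounded components at every scale into both the simple connectedness of $V_r$ and the absence of critical points and non-injectivity of the lift $\tilde f$. Each branch point or repeated value of $\tilde f$ must be shown to produce, at an appropriately chosen scale $s$, an additional unbounded component of the preimage disjoint from $V_s$. This is a delicate topological argument interweaving the nested tract structure with the branched-covering geometry of $\tilde f$, and it is where all of the information in the non-logarithmic hypothesis is used.
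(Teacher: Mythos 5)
Your outer scaffolding is sound: bounded components of $f^{-1}(D(a,s))$ inside the tract are indeed impossible by the proper-map argument, and once some scale $s$ produces a second (necessarily unbounded) component $W\neq V_s$, nesting inside $W$ does yield another direct singularity over $a$ with neighbourhoods in $V_{r_0}$ (the intersection of the nested sets is empty because $f\neq a$ on the tract, and the new singularity differs from the original because $W\cap V_s=\emptyset$). But these are the routine parts. The theorem is precisely the assertion that the remaining alternative --- a single component at every scale $s<r$ --- forces the singularity to be logarithmic, and that is exactly the step you do not prove; your reduction is therefore an equivalent restatement of the theorem rather than a proof of it. (Note also that the paper does not prove this statement itself: it quotes it from Bergweiler and Eremenko, whose proof of it is a substantive argument, so there is no shortcut to compare against.)

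The two mechanisms you sketch for the crux do not work as stated. A critical point with critical value $v$, $|v-a|=t$, does not force $f^{-1}(D(a,s))\cap V_r$ to disconnect for $s$ just below $t$: the two local lobes near the branch point may be joined elsewhere in the unbounded tract, and disconnection is a global property that no local analysis at a critical point can deliver. Likewise, non-injectivity of the lift $\tilde f$ does not create extra components: any infinite-degree covering of the punctured disc (the exponential itself) is maximally non-injective, yet the preimages of all the half-planes $\{w:\operatorname{Re}w<\log s\}$ are connected; so ``distinct sheets produce distinct unbounded preimage components'' is not a valid inference, and separating the covering case from the non-covering case is where the whole difficulty of the theorem is concentrated. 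There is a further gap in the lifting step: to lift $f|_{V_r}$ through $w\mapsto a+e^{w}$ you need the image of $\pi_1(V_r)$ under $f$ to be trivial in $\pi_1(D(a,r)\setminus\{a\})$, and your proposed derivation of simple connectedness of $V_r$ from ``uniqueness of components plus the no-bounded-components principle'' fails: a bounded complementary component (a hole) of $V_r$ may contain $a$-points of $f$ lying in other components of $f^{-1}(D(a,r_0))$, and these sets are not contained in $V_r$, so neither principle says anything about them --- directness only guarantees $f\neq a$ on the tract itself. Since you yourself identify the crux as the ``main obstacle'' and leave it as a delicate argument to be supplied, the proposal is incomplete at exactly the point where the non-logarithmic hypothesis has to do its work.
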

Taken together, these results show that if $a\iscomplex$ is the projection of a {\dnlts} and is not the limit of critical values, then there is an infinite number of singularities both over $a$ and over points arbitrarily close to $a$. \\

%
%
We mention two examples of {\tef}s with {\dnltss} which illustrate some of the possibilities described above.
\begin{example}
\normalfont Heins \cite[p.435]{MR0094457} gave the example $f_1(z) = e^z \sin(e^z)$, which has precisely one {\dnlts} over $\infty$. Since the set of critical values of $f_1$ is unbounded, case $(i)$ of Theorem~\ref{theo2} holds for this function. This example also shows that Theorem~\ref{OtherBergErem} cannot be strengthened to $a\in\widehat{\mathbb{C}}$.
\end{example}
\begin{example}
\normalfont Herring \cite{MR1642181} gave the example $f_2(z) = \int_0^z \exp(-e^t) \ dt$. This function has no critical points. It follows from results in \cite{MR1642181} that $f_2$ has a {\dnlts} over $\infty$, every neighbourhood of which contains a left half-plane. It also follows that within each set $$A_k = \{ z : \operatorname{Re}(z) > 0, |\operatorname{Im}(z) - 2k\pi|\leq \pi/2\}, \qfor k\in\mathbb{Z},$$ there is a neighbourhood of a {\dlts} with projection $$\alpha_k = \alpha + 2k\pi i, \text{ where } \alpha\iscomplex \text{ is constant}.$$ Moreover, each neighbourhood of the {\dnlts} over $\infty$ contains neighbourhoods of these {\dltss}. Hence case $(ii)$ of Theorem~\ref{theo2} holds for $f_2$.
\end{example}
The structure of this paper is as follows. In Section~\ref{S1} we give details of Iversen's classification of singularities. We then prove Theorem~\ref{theo2} in Section~\ref{S1A}. Finally, in Section~\ref{S2}, we use Theorem~\ref{theo2} to prove Theorem~\ref{theo1}.
%
%
%
%
%
\section{Singularities of the inverse function}
\label{S1}
We write $\widehat{\mathbb{C}} = \mathbb{C}\cup\{\infty\}$, and use $B(w, r)$ to refer to the open disc around the point $w\iscomplex$, of radius $r$. We also write $B(\infty, r) = \{~z~:~|z|~>~r\}$. We write $\disc$ for the unit disc $B(0, 1)$, and $\disc^*$ for the punctured unit disc $B(0, 1)\backslash\{0\}$. We write $\hplane$ for the left half-plane $\{z : \operatorname{Re}(z) < 0\}$.

We recall Iversen's classification of singularities; see, for example, \cite{MR1344897}, \cite{MR2507243}, and \cite{Iversen}. Suppose that $f$ is a {\tef}, and suppose that $a\in\widehat{\mathbb{C}}$. For each $r > 0$, we can choose a component $U(r)$ of $f^{-1}(B(a, r))$ so that $r_1 < r_2$ implies that $U(r_1) \subset U(r_2)$. Then we have two possibilities:
\begin{enumerate}[(a)]
\item $\cap_{r>0} U(r)$ consists of a single point $w$, say, or
\item $\cap_{r>0} U(r) = \emptyset$.
\end{enumerate}

In the first case, if $f'(w) = 0$, then $w$ is a \itshape critical point \normalfont of $f$, $a$ is a \itshape critical value \normalfont of $f$, and we say that the singularity is \itshape algebraic\normalfont.

In the second case we say that the choice $r \mapsto U(r)$ defines a \itshape {\ts} \normalfont of $f^{-1}$, and we say that $a$ is the \itshape projection \normalfont of the {\ts} or equivalently that the {\ts} is \itshape over \normalfont $a$. Any of the sets $U(r)$ is called a \itshape neighbourhood \normalfont of the {\ts}. 

We say that a {\ts} over a point $a$ is \itshape direct \normalfont if there exists $r > 0$ such that $f(z) \ne a$, for $z \in U(r)$. Otherwise we call the {\ts} \itshape indirect\normalfont. We call a direct {\ts} over a point $a$ \itshape logarithmic \normalfont if, for some $r>0$, the restriction $f : U(r) \rightarrow B(a, r)\backslash\{a\}$ is a universal covering. If a {\ts} is direct but not logarithmic, we use the term \itshape direct non-logarithmic\normalfont.

We call a curve $\Gamma: (0, 1) \rightarrow \mathbb{C}$ an \itshape asymptotic curve \normalfont with \itshape asymptotic value \normalfont $a$ if, as $t\rightarrow 1$, we have both $\Gamma(t)\rightarrow\infty$ and $f(\Gamma(t))\rightarrow a$. Given a {\ts} over a point $a$ it is possible to construct an asymptotic curve with asymptotic value $a$, and \itshape vice versa\normalfont; see \cite[p.2]{MR1344897} for details.
%
%
%
%
%
\section{{\dnltss}}
\label{S1A}
In this section we prove Theorem~\ref{theo2}. We need the following \cite[Theorem~$4'$]{MR0094457}.
\begin{theo}
\label{Heins}
Suppose that $f$ is a {\tef}, $D\subset\mathbb{C}$ is a domain, and $W$ is a component of $f^{-1}(D)$. Then either $f_W$, the restriction of $f$ to $W$, has finite constant valence in $D$, or else there is at most one point of $D$ at which the valence of $f_W$ is finite.
\end{theo}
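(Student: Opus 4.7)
My plan is to analyse the valence function $v \colon D \to \{0, 1, 2, \ldots\} \cup \{\infty\}$ defined by $v(w) = \sum_{z \in f^{-1}(w)\cap W} \operatorname{mult}_z(f)$, and to prove the contrapositive of the statement: if $v(a) < \infty$ and $v(b) < \infty$ at two distinct points $a, b \in D$, then $v$ is finite and constant on all of $D$.

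I would first pin down the local picture at a point $w_0 \in D$ with $v(w_0) = n < \infty$. Listing the preimages as $z_1, \ldots, z_k \in W$ with multiplicities summing to $n$, I choose a small disc $\Delta = B(w_0, \rho) \subset D$ and pairwise disjoint neighbourhoods $U_i \subset W$ of $z_i$ so that each $f \colon U_i \to \Delta$ is a proper branched cover of degree $\operatorname{mult}_{z_i}(f)$. Hence $v(w) \geq n$ on $\Delta$, with equality failing exactly when $f^{-1}(\Delta) \cap W$ has a further component disjoint from every $U_i$. Such an extra component either maps properly to $\Delta$ (contributing a further constant summand to $v$ throughout $\Delta$) or accumulates at the ideal boundary of $W$, producing an asymptotic curve along which $f \to c$ for some $c \in \overline{\Delta}$---equivalently, a {\ts} of $f|_W$ over $c$, in the sense of the Iversen classification recalled in Section~\ref{S1}.

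The core step is a path-lifting argument. Given distinct $a, b$ with finite valence, I join them by a smooth arc $\gamma \colon [0,1] \to D$ avoiding the (countable) set of critical values of $f|_W$ that lie along $\gamma$, and I lift $\gamma$ from each preimage of $a$ (counted with multiplicity). Each maximal lift either ends at a preimage of $b$, or ceases to extend at some first time $t_0 < 1$, in which case it traces out an asymptotic curve in $W$ with asymptotic value $c := \gamma(t_0) \in D$. In the no-escape case, a sheet count gives $v(a) \leq v(b)$; reversing roles yields equality, and extending the argument to arcs from $a$ reaching every point of $D$ shows that $f|_W$ is a branched covering of $D$ with finite constant valence---the first alternative. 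In the escape case, one obtains a {\ts} of $f|_W$ over $c \in D$.

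The main obstacle is the final step: deducing from such a singularity that $v \equiv \infty$ on a deleted neighbourhood of $c$, which then contradicts $v(b) < \infty$ and rules the escape case out. This is immediate for an {\its} (where $v(c) = \infty$ already) and follows from the universal-cover property in the case of a {\dlts}. The delicate {\dnlts} case requires a tract-structure analysis: one shows that every neighbourhood of the singularity contains $f|_W$-sheets over unboundedly many points near $c$, yielding the required surplus of preimages. Assembling the three cases completes the dichotomy.
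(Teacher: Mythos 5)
Note first that the paper does not prove this statement at all: it is quoted from Heins \cite[Theorem $4'$]{MR0094457}, so your argument has to stand on its own as a proof of a genuinely nontrivial classical theorem. Your reduction to the contrapositive and the path-lifting dichotomy are sound as far as they go: along an arc avoiding critical values, a maximal lift that cannot be continued must tend to infinity (a limit point in $\mathbb{C}$ would lie in $W$, since $W$ is a component of the open set $f^{-1}(D)$, and the lift would extend), so an escaping lift does yield an asymptotic curve in $W$ with asymptotic value $c=\gamma(t_0)\in D$, i.e.\ a {\ts} of $(f|_W)^{-1}$ over a point of $D$; and if no lift ever escapes the sheet count does give constancy. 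But the entire difficulty of the theorem is concentrated in what you do with that singularity, and there the proposal breaks down.

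Concretely: (1) even granting your claim that $v\equiv\infty$ on a deleted neighbourhood of $c$, this does \emph{not} contradict $v(a),v(b)<\infty$, because $c$ is merely some point of the arc and need not lie near $a$ or $b$ — compare $f(z)=e^z$ with $W=D=\mathbb{C}$, where $v\equiv\infty$ off $0$ coexists with $v(0)=0<\infty$. What you actually need is that a {\ts} of $(f|_W)^{-1}$ over a point of $D$ forces $v=\infty$ at \emph{every} point of $D$ with at most one exception; but that is essentially the statement being proved, so the argument has gone in a circle rather than closed. (2) The local claim itself is not established: for an {\its} over $c$ you only get $v(c)=\infty$, not $v\equiv\infty$ on a punctured neighbourhood of $c$; and for the {\dnlts} case you offer only ``a tract-structure analysis'', which is exactly the hard case — note that the known structure results for such singularities (Theorem~\ref{OtherBergErem} of Bergweiler--Eremenko, and Theorem~\ref{theo2} of this very paper) are themselves proved \emph{using} Theorem~\ref{Heins}, so you would need a genuinely independent argument to avoid circularity. (3) Finally, your lifting scheme is vacuous when $v(a)=v(b)=0$, since there are no preimages to lift from; yet ``two omitted values in $W$'' is precisely the situation the paper needs this theorem to exclude (see the remark immediately after its statement), so this case cannot be left out.
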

Here the \itshape valence \normalfont of a point $a\in D$ is the number of solutions of $f(z) = a$, for $z \in W$. It follows from Theorem~\ref{Heins} that there cannot be two distinct points $a, a' \in D$ such that $f(z) \in \{a, a'\}$ has no solutions, for $z \in W$. 

We also need the following result, and two straightforward corollaries of it. This seems to be well known, and follows quickly from results such as \cite[Example 4.2]{MR1326604}. See also \cite[Theorem 6.1.1]{MR2757285} for a detailed proof.
\begin{theo}
\label{Tcovering}
Suppose that $W\subset\mathbb{C}$ is a domain, and $g: W \to \disc^*$ is an unbranched covering map. Then exactly one of the following holds:
\begin{enumerate}[(i)]
\item there exists a conformal map $\phi: W \to \hplane$ such that $g = \exp \circ \ \phi$;
\item there exists a conformal map $\phi: W \to \disc^*$ such that $g = (\phi)^m$, for some $m\isnatural$.
\end{enumerate}
\end{theo}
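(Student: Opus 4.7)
The plan is to use standard covering-space theory together with the Riemann mapping theorem. Since $\pi_1(\disc^*)\cong\mathbb{Z}$ and the induced homomorphism $g_*:\pi_1(W)\to\pi_1(\disc^*)$ is injective, the image of $g_*$ is either the trivial subgroup (leading to (i)) or $m\mathbb{Z}$ for some $m\isnatural$ (leading to (ii)). These alternatives are mutually exclusive, since in the first case $\pi_1(W)$ is trivial and in the second it is infinite cyclic.

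For case (i), $W$ is simply connected, and one first checks that $W\ne\mathbb{C}$: otherwise lifting $g$ through the universal cover $\exp:\hplane\to\disc^*$ would produce a bounded non-constant entire function, contradicting Liouville's theorem. Both $g:W\to\disc^*$ and $\exp:\hplane\to\disc^*$ are then universal coverings of $\disc^*$, so by the uniqueness of universal covers there is a bijective lift $\phi:W\to\hplane$ with $g=\exp\circ\phi$. This $\phi$ is automatically holomorphic, since locally it is a branch of $\exp^{-1}\circ g$.

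For case (ii), I would apply the lifting criterion to $g:W\to\disc^*$ against the $m$-fold covering $p_m:\disc^*\to\disc^*$, $z\mapsto z^m$, whose induced image in $\pi_1(\disc^*)$ is precisely $m\mathbb{Z}$. This yields a continuous lift $\phi:W\to\disc^*$ satisfying $g=\phi^m$, which is holomorphic because $p_m$ is a local biholomorphism. A lift of a covering by a covering is again a covering, and from $(p_m)_*\circ\phi_*=g_*$ together with the fact that $(p_m)_*$ is multiplication by $m$ one reads off $\phi_*(\pi_1(W))=\mathbb{Z}$. Hence $\phi$ has trivial deck group, is injective, and so is a conformal isomorphism $W\to\disc^*$.

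The only delicate point is verifying that the continuous lifts supplied by covering-space theory are in fact holomorphic, but this is a standard consequence of the fact that a continuous lift through a local biholomorphism is automatically holomorphic. I therefore do not expect any serious obstacle.
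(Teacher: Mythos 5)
Your proof is correct. Note, however, that the paper does not prove Theorem~\ref{Tcovering} at all: it records the result as well known and refers to the literature for a detailed proof, so there is no in-paper argument to compare against; your covering-space proof --- classifying $g_*(\pi_1(W))$ among the subgroups of $\pi_1(\disc^*)\cong\mathbb{Z}$, then using uniqueness of the universal cover when the image is trivial and the lifting criterion against $z\mapsto z^m$ when it is $m\mathbb{Z}$ --- is exactly the standard route the cited references take, and holomorphy of the topological lifts is indeed automatic because $\exp$ and $z\mapsto z^m$ are local biholomorphisms. Two cosmetic points only: a lift of $g$ through $\exp:\hplane\to\disc^*$ lands in a half-plane, which is not bounded, so in your Liouville step you should either apply Liouville directly to $g$ itself (it maps into $\disc^*$, hence is bounded) or first compose the lift with a M\"obius map onto $\disc$; and in case (ii) the cleanest conclusion from $\phi_*(\pi_1(W))=\pi_1(\disc^*)$ is that the covering $\phi$ has index one, i.e.\ a single sheet, hence is injective --- ``trivial deck group'' is not by itself the right reason for injectivity, although here it is equivalent since all subgroups of $\mathbb{Z}$ are normal. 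Neither point affects the validity of the argument.
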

%
%
The first corollary is similar to \cite[Theorem 6.2.2]{MR2757285}, and differs from that result in that it specifies the location of the neighbourhoods of the singularities, which is necessary for the proof of Theorem~\ref{theo2}. We give a proof for completeness.
\begin{corollary}
\label{limitcorollary}
Suppose that $f$ is a {\tef} with a {\ts} which is not logarithmic, over a point $a\in\widehat{\mathbb{C}}$. Then at least one of the following holds:
\begin{enumerate}[(i)]
\item $a$ is the limit of critical values of $f$;
\item every neighbourhood of this singularity contains a neighbourhood of another {\ts} of $f^{-1}$ whose projection is different from $a$.
\end{enumerate}
\end{corollary}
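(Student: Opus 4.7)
The plan is to argue the contrapositive: assume that $a$ is not a limit of critical values of $f$, and show that every neighbourhood $U(r)$ of the given {\ts} contains a neighbourhood of another {\ts} of $f^{-1}$ whose projection differs from $a$. First I would choose $r>0$ small enough that $B(a,r)\setminus\{a\}$ contains no critical values of $f$, and set $V=U(r)\setminus f^{-1}(a)$. Since $f^{-1}(a)$ is discrete and $U(r)$ is open and connected in $\mathbb{C}$, $V$ is connected, and the restriction $f|_V\colon V\to B(a,r)\setminus\{a\}$ has no critical points and is therefore a local homeomorphism. Identifying $B(a,r)\setminus\{a\}$ with $\disc^*$ in the usual way, the strategy is to rule out properness of $f|_V$, and then to extract an asymptotic value from non-properness.

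The heart of the argument is to show that $f|_V$ cannot be a proper map, and hence cannot be a covering of $\disc^*$. Suppose for contradiction it were. By Theorem~\ref{Tcovering}, either $V$ is simply connected (universal-cover case) or $V\cong\disc^*$ (finite-cover case). In the simply connected case, if the singularity is direct then $V=U(r)$, which realises the singularity as logarithmic and contradicts the hypothesis; if it is indirect, the nesting of the $U(r)$ together with $\bigcap_{r>0}U(r)=\emptyset$ forces $U(r)\cap f^{-1}(a)$ to be infinite, so $V$ has fundamental group of infinite rank, inconsistent with $V$ simply connected. In the finite-cover case $V\cong\disc^*$; the transcendental character of the singularity forces the puncture of $\disc^*$ to correspond to $\infty$ in $V$ (otherwise a limit point of $V$ would lie in $\partial U(r)$ and satisfy both $f=a$ and $|f-a|=r$, impossible for $r>0$), so $V$ is unbounded with compact connected complement $K=\mathbb{C}\setminus V$. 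The maximum principle applied to $f-a$ on $K$ (or to $1/f$ when $a=\infty$) then bounds $f$ on $K$, giving $f(\mathbb{C})\subset\overline{B(a,r)}$ when $a\in\mathbb{C}$ (contradicting Liouville) or $|f|\le r$ on $K$ when $a=\infty$ (so every $w$ with $|w|>r$ is attained exactly $m$ times, contradicting Picard).

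Having ruled out properness, I would use non-properness to produce a sequence $z_n\in V$ with $z_n\to\infty$ and $f(z_n)\to c$ for some $c\neq a$ with $c\in\overline{B(a,r)}$ (the limit cannot lie on $\partial B(a,r)$ or equal $a$, since on $\partial U(r)$ one has $|f-a|=r$ and $a\notin f(V)\cup\{$compact $K\}$ in the relevant sense). A standard construction then produces an asymptotic path in $V$ along which $f$ tends to $c$, so $c$ is an asymptotic value of $f$. For $s>0$ sufficiently small that $B(c,s)\subset B(a,r)\setminus\{a\}$, the component of $f^{-1}(B(c,s))$ containing a tail of this asymptotic path lies inside $V\subset U(r)$ and is the required neighbourhood of a {\ts} with projection $c\neq a$.

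The main obstacle I anticipate is the finite-cover sub-case: one must verify rigorously that $V$ is doubly connected with compact connected complement---this uses both the conformal type provided by Theorem~\ref{Tcovering} and the transcendental nature of the singularity---and then invoke the maximum principle uniformly for both $a\in\mathbb{C}$ and $a=\infty$ to reach a Liouville or Picard contradiction. The indirect sub-case of the universal-cover argument, which exploits that $\pi_1(\disc^*)=\mathbb{Z}$ cannot accommodate the fundamental group of an infinitely-punctured domain, is the other delicate point.
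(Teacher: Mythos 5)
Your approach is correct in outline but takes a genuinely different route from the paper. The paper argues by contradiction: it assumes that \emph{both} (i) and (ii) fail, which immediately makes $f|_W$ (with $W=U(r)\setminus f^{-1}(a)$) an unbranched \emph{covering} of $B(a,r)\setminus\{a\}$, and then Theorem~\ref{Tcovering} gives a two-line contradiction (case (i) of that theorem forces the singularity to be logarithmic; case (ii) forces $f$ to extend across $\infty$, so the singularity is not transcendental). You instead prove the contrapositive (``not (i) implies (ii)'') directly: you show that $f|_V$ cannot be a covering by ruling out both conformal types via Liouville/Picard, and then you \emph{extract} an asymptotic value, and hence a transcendental singularity inside $U(r)$, from the resulting failure of properness. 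This buys you a more constructive proof that produces the singularity explicitly, at the cost of the extra extraction step, which the paper simply does not need because its contradiction hypothesis hands it the covering structure for free. Your Liouville/Picard contradictions in the finite-cover case are in substance the same observation the paper phrases as ``the singularity is algebraic.''

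Two points deserve attention. First, the sentence ``cannot be a proper map, and hence cannot be a covering of $\disc^*$'' has the implication backwards: a proper local homeomorphism onto a connected base is a covering, so it is ``not a covering, hence not proper'' that you need (and that your argument actually establishes); $\exp\colon\hplane\to\disc^*$ is a non-proper covering. Second, in the finite-cover case your argument for placing the puncture at $\infty$ only treats the direct situation, where $\partial V=\partial U(r)$: you observe that a finite puncture would lie on $\partial U(r)$ with both $f=a$ and $|f-a|=r$. In the indirect situation the finite puncture would instead be an interior preimage of $a$ (a point of $f^{-1}(a)\cap U(r)$), which your dichotomy does not cover. In fact the indirect case of the finite-cover alternative is vacuous for the same $\pi_1$ reason you use in the universal-cover case ($V$ has infinitely many punctures, so cannot be conformally $\disc^*$), but you should say so, or alternatively handle the finite interior puncture $p$ directly: $f'(p)\neq 0$ forces $m=1$, whence $f\colon U(r)\to B(a,r)$ is a bijection and $\bigcap_{r'<r}U(r')=\{p\}$, contradicting that the singularity is transcendental.
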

\begin{proof}
Suppose that, contrary to the conclusion of the corollary, we can choose a sufficiently small $r>0$ such that there are no critical points of $f$ in $$W=U(r)\backslash\{z:f(z)=a\},$$ and all {\tss} of $f^{-1}$ with a neighbourhood contained in $U(r)$ have projection $a$. The restriction of $f$ to $W$ is, therefore, an unbranched covering of $B(a, r)\backslash\{a\}$.

Let $h$ be a conformal map from $B(a, r)\backslash\{a\}$ to $\disc^*$. We apply Theorem~\ref{Tcovering} with $g = h \circ f$. If case $(i)$ of the theorem holds, then $W$ is simply connected, and the singularity is logarithmic, which is a contradiction. If case $(ii)$ of the theorem holds, then the conformal mapping $\phi$ has a punctured disc in the Riemann sphere as its domain, and at the puncture $\phi$ has a removable singularity. Hence, since $f=h^{-1} \circ (\phi)^m$, the singularity is algebraic; this is also a contradiction.
\end{proof}
The second corollary of Theorem~\ref{Tcovering} is similarly straightforward, and we omit the proof.
\begin{corollary}
\label{dltscorollary}
Suppose that $f$ is a {\tef} with a {\dlts} over a point $a\in\widehat{\mathbb{C}}$. Then there exist a neighbourhood of the singularity, $W=U(r)$, and conformal maps $\phi: W \to \hplane$ and $h : B(a,r) \to \disc^*$ such that $h \circ f = \exp \circ \ \phi$.
\end{corollary}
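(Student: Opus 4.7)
The plan is to deduce this directly from Theorem~\ref{Tcovering}, following the template of the proof of Corollary~\ref{limitcorollary} but now reading off the conclusion from the first alternative rather than ruling out both. The hypothesis that the singularity is direct and logarithmic over $a$ means, by definition, that for all sufficiently small $r>0$ the restriction $f : U(r) \to B(a,r)\setminus\{a\}$ is a universal covering. I would fix such an $r$ and set $W = U(r)$.

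Next, I would choose a conformal map $h : B(a,r)\setminus\{a\} \to \disc^*$. This exists whether $a\iscomplex$ or $a=\infty$: in the finite case a suitable affine scaling works, and in the case $a = \infty$ one uses the map $z\mapsto c/z$ for appropriate $c$. Composing, the map $g := h\circ f : W \to \disc^*$ is an unbranched covering, since it is obtained from a covering map by post-composition with a conformal isomorphism.

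Now I would apply Theorem~\ref{Tcovering} to $g$. Case $(i)$ provides a conformal $\phi : W \to \hplane$ with $g = \exp\circ\phi$, which is precisely the conclusion of the corollary. So the task reduces to ruling out case $(ii)$. In case $(ii)$, $g$ would factor as $\phi^m$ for some conformal $\phi : W \to \disc^*$ and some $m\isnatural$, so $g$ would be a covering of finite degree $m$. But $f$, and hence $g = h\circ f$, is a universal covering of a domain whose fundamental group is $\mathbb{Z}$; its degree is infinite. This contradiction forces case $(i)$.

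There is no real obstacle here; the only point that needs a word of care is to check that Theorem~\ref{Tcovering} is genuinely applicable (which it is, since an unbranched covering of $\disc^*$ is exactly the setting of that theorem) and to confirm that $h$ can be taken to the punctured disc even when $a=\infty$, which is immediate. The proof is essentially a two-line deduction from the earlier machinery.
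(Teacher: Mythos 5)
Your proof is correct and matches what the paper intends (it omits the proof as ``similarly straightforward'' to Corollary~\ref{limitcorollary}): compose with a conformal map $h$ to $\disc^*$, apply Theorem~\ref{Tcovering} to $g = h\circ f$, and discard case $(ii)$ because a universal covering of $\disc^*$ cannot factor as a finite-degree power map --- equivalently, $W$ is simply connected while any conformal copy of $\disc^*$ is not. You also silently and correctly repaired the typo in the statement, where $h : B(a,r) \to \disc^*$ should of course read $h : B(a,r)\setminus\{a\} \to \disc^*$.
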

We now prove Theorem~\ref{theo2}.
\begin{proof}[Proof of Theorem~\ref{theo2}]
Suppose that $f$ has a {\dnlts} over a point $a~\in~\widehat{\mathbb{C}}$, and that $a$ is not the limit of critical values of $f$. The existence of {\tss}, over points other than $a$, in any neighbourhood of this {\dnlts} follows from Corollary~\ref{limitcorollary}; we need to show that in any neighbourhood of this singularity there are singularities over points other than $a$, which are either logarithmic or indirect.

The structure of the proof is as follows. We assume the contrary, and construct a sequence of {\dnltss} the projections of which have a limit. We show that this limit is itself the projection of a {\dnlts}, and use the comment after Theorem~\ref{Heins} to obtain a contradiction. Figure~1 illustrates the points and sets constructed.

\begin{figure}[ht]
	\centering
	\includegraphics[width=12cm,height=9cm]{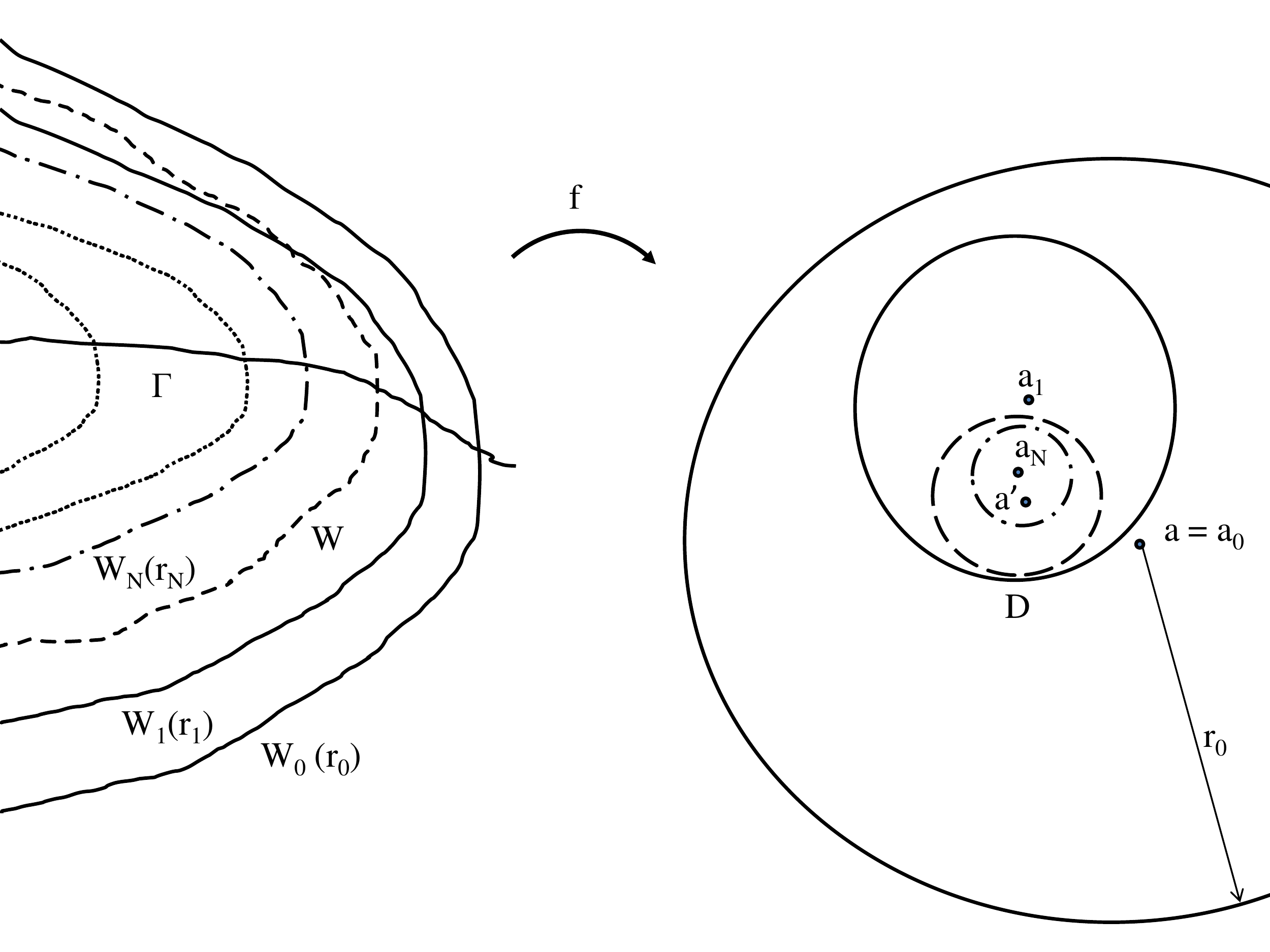}
	\caption{The construction used in the proof of Theorem~\ref{theo2}}
\end{figure}

Let $r_0>0$ be such that there are no critical values of $f$ in $B(a, r_0)$ and also, since the {\ts} is assumed to be direct, such that $f(z) \ne a$, for $z$ in the neighbourhood $U(r_0)$. Suppose also that $r_0$ is sufficiently small that all {\tss}, over points other than $a$ and with a neighbourhood contained in $U(r_0)$, are direct non-logarithmic.
 
Let $(R_n)$ be any increasing sequence of positive real numbers such that $R_n\rightarrow\infty$ as $n\rightarrow\infty$. We construct a sequence of neighbourhoods of {\dnltss} $W_n(r_n)$, with projection $a_n$ say, such that, for $n\geq 0$,
\begin{itemize}
\item $W_{n+1}(r_{n+1}) \subset W_n(r_n) \subset U(r_0)$;
\item $W_{n+1}(r_{n+1}) \cap B(0, R_{n+1}) = \emptyset$;
\item $\overline{B(a_{n+1}, r_{n+1})} \subset B(a_n, r_n)$;
\item $a_{n+1} \ne a$;
\item the equation $f(z) = a_n$ has no solutions for $z \in W_n(r_n)$;
\item $r_n \rightarrow 0$ as $n\rightarrow\infty$.
\end{itemize}

We set $W_0(r_0)=U(r_0)$, and then construct this sequence inductively.  By assumption, $W_n(r_n)$ is a neighbourhood of a {\dnlts}, so we can use Corollary~\ref{limitcorollary} to choose a {\ts} with projection $a_{n+1}$ say and with neighbourhoods $W_{n+1}(r), \ r>0,$ such that $W_{n+1}(r_{n+1}') \subset W_n(r_n)$, for some $r_{n+1}'>0$, and such that $0 < |a_{n+1} - a_n| < r_n/2$.

Next, we choose $r_{n+1}''>0$ such that $W_{n+1}(r_{n+1}'') \cap B(0, R_{n+1}) = \emptyset$. By assumption $W_{n+1}(r_{n+1}')$ is a neighbourhood of a {\dts} with projection $a_{n+1}$. Hence, there exists $r_{n+1}$ with $$0 < r_{n+1} < \min\{r_{n+1}', r_{n+1}'', |a_{n+1} - a_n|/4\}$$ such that $f(z) = a_{n+1}$ has no solutions for $z \in W_{n+1}(r_{n+1})$. Finally, both $\overline{B(a_{n+1}, r_{n+1})} \subset B(a_n, r_n)$ and $a_{n+1}\ne a$, by the choice of $a_{n+1}$ and $r_{n+1}$. This completes the construction.

Let $a' = \lim_{n\rightarrow\infty} a_n$, which exists by our choice of $r_n$. Note that, by construction, $a' \ne a$. Let $\Gamma$ be a curve produced inductively by joining a point in $W_n(r_n)$ to a point in $W_{n+1}(r_{n+1})$ using a curve lying in $W_n(r_n)$. By construction, $\Gamma$ is an asymptotic curve with asymptotic value $a'$. Hence $a'$ is the projection of a {\ts} of $f^{-1}$ which, by assumption, is direct non-logarithmic. 

Choose $r>0$ sufficiently small that $D=B(a', r) \subset B(a, r_0)$, and such that $f(z) = a'$ has no solutions in $W$, where $W$ is the component of $f^{-1}(D)$ which has unbounded intersection with $\Gamma$.

Now, by construction, $a'\in B(a_n, r_n)$, for $n\isnatural$, and so there is an $N>0$ such that $a'\in B(a_N, r_N) \subset D$ and also $a' \ne a_N$. Note that $W_N(r_N) \subset W$, since the intersection of $\Gamma$ and $W_N(r_N)$ is unbounded. Then $a'$ and $a_N$ are two distinct points in $B(a_N, r_N)$ such that $f(z) \in \{a', a_N\}$ has no solutions in $W_N(r_N)$, which is contrary to Theorem~\ref{Heins}.
\end{proof}
%
%
%
%
%
\section{The Eremenko-Lyubich Class}
\label{S2}
In this section we prove Theorem~\ref{theo1}.
%
%
%
We need the following, \cite[Theorem I.2.2]{MR1230383}.
\begin{theo}
\label{Lindelof}
Suppose that $W \subset \widehat{\mathbb{C}}$ is simply connected and $\partial W$ has more than one point. Suppose that $\psi$ maps $W$ conformally to $\disc$. Let $\Gamma$ be a Jordan arc in $W$ with endpoint $z_0 \in \partial W$. Then the curve $\psi(\Gamma)$ terminates in a point $s_0\in \partial \disc$, and $\psi^{-1}(s) \rightarrow z_0$ as $s \rightarrow s_0$ inside any Stolz angle at $s_0$.
\end{theo}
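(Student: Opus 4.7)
The theorem is a classical consequence of Lindel\"of's maximum principle; the plan is to reduce to the setting of bounded holomorphic functions on $\disc$ and then apply Lindel\"of's theorem. There are two things to establish: (i) that $\psi(\Gamma)$ has a single cluster point $s_0 \in \partial\disc$, and (ii) that $\psi^{-1}$ has non-tangential limit $z_0$ at $s_0$.

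First I would reduce to a bounded setting. Since $\partial W$ has at least two points, post-composing $\psi^{-1}$ with a suitably chosen M\"obius transformation $T$ of $\widehat{\mathbb{C}}$ produces a bounded holomorphic function $g = T \circ \psi^{-1} : \disc \to \mathbb{C}$, with $T(z_0)\iscomplex$. Because $z_0 \in \partial W$, as we move along $\Gamma$ toward $z_0$ the image $\psi(\Gamma)$ leaves every compact subset of $\disc$, so its cluster set $E$ on $\partial\disc$ is a nonempty compact connected subset of $\partial\disc$, hence either a single point or an arc of positive length; and along $\psi(\Gamma)$ we have $g \to T(z_0)$.

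For step (i), I would argue by contradiction: suppose $E$ is a sub-arc of positive length. At each interior point $s$ of $E$ one may extract from the Jordan arc $\psi(\Gamma)$ a Jordan sub-arc ending at $s$ along which $g$ still tends to $T(z_0)$. The classical Lindel\"of theorem for bounded holomorphic functions then gives $g$ a non-tangential limit $T(z_0)$ at every such $s$, producing a set of positive Lebesgue measure on $\partial\disc$ on which the boundary values of $g$ equal $T(z_0)$. By uniqueness of boundary values for bounded holomorphic functions (Privalov / F.\ and M.\ Riesz), $g$ would then be constant, contradicting the injectivity of $\psi^{-1}$. Hence $E = \{s_0\}$ and $\psi(\Gamma)$ terminates at $s_0$.

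Step (ii) is now immediate: applying Lindel\"of's theorem to $g$ at the single cluster point $s_0$ yields that $g$ has non-tangential limit $T(z_0)$ at $s_0$, and composing with $T^{-1}$ gives $\psi^{-1}(s) \to z_0$ inside any Stolz angle at $s_0$. The main obstacle is the Jordan sub-arc extraction in step (i), which is somewhat delicate because $E$ might be approached tangentially by $\psi(\Gamma)$; a slicker alternative would be to invoke the theory of prime ends directly, treating $\Gamma$ as an accessible boundary arc with accessible endpoint $z_0$, in which case the Carath\'eodory extension yields the unique terminal point $s_0$ at once, after which Lindel\"of finishes the proof.
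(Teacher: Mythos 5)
The paper does not actually prove this statement: it is quoted verbatim from Carleson and Gamelin (the reference \cite{MR1230383}, Theorem I.2.2), so there is no internal argument to compare yours with. Judged on its own, your outline follows the standard route (Fatou plus Riesz--Privalov uniqueness to pin down a single landing point, then Lindel\"of for the angular limit), but two steps have genuine gaps. First, the reduction to a bounded function is not achieved by a M\"obius transformation alone: $T\circ\psi^{-1}$ is bounded only if $T^{-1}(\infty)$ is an \emph{interior} point of $\widehat{\mathbb{C}}\setminus W$, and the complement of a simply connected domain with nondegenerate boundary may have empty interior (take $W=\mathbb{C}\setminus(-\infty,0]$). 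You need the usual square-root trick --- fix $a\ne b$ in $\partial W$, take a branch of $\sqrt{(w-a)/(w-b)}$ on $W$, whose image omits an open set, and then apply a M\"obius map --- or else work with normal functions (a univalent map into $\widehat{\mathbb{C}}$ omitting two values is normal) and use the Lehto--Virtanen form of Lindel\"of's theorem together with the corresponding uniqueness theorem.

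More seriously, the central step of (i) fails as stated. If the cluster set $E\subset\partial\disc$ is a nondegenerate arc, then $\psi(\Gamma)$ does not land at any interior point $s$ of $E$, and no Jordan sub-arc of $\psi(\Gamma)$ "ends at $s$": a tail of the curve clusters on all of $E$, and concatenating pieces of $\psi(\Gamma)$ with connecting arcs destroys the control $g\to T(z_0)$, so Lindel\"of's theorem cannot be applied at $s$ in the way you propose (the obstacle is not tangential approach). The classical repair is different: since $|\psi(\Gamma(t))|\to 1$ and the cluster set is exactly $E$, for $t$ near $1$ the curve cannot meet radii at points of $\partial\disc\setminus E$ at moduli close to $1$, so in passing between neighbourhoods of the two endpoints of $E$ (at both of which it accumulates infinitely often) it must cross the radius at each interior point $s$ of $E$ in points whose moduli tend to $1$. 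Hence wherever the radial limit of $g$ exists --- almost everywhere, by Fatou --- it equals $T(z_0)$ on the interior of $E$, and Riesz--Privalov uniqueness forces $g$ to be constant, the desired contradiction; Lindel\"of is then needed only once, at the single landing point $s_0$. Your suggested alternative via prime ends is legitimate, but it amounts to citing a theorem at essentially the level of the statement itself, which is exactly what the paper does by referring to Carleson and Gamelin.
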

Here a \itshape Stolz angle \normalfont at $s_0\in \partial \disc$ is a set of the form; \[\{s\in\disc : | \arg(1 - \overline{s_0} s)| < \alpha, |s - s_0| < d \}, \qfor 0 < \alpha < \pi/2, \ d < 2\cos\alpha.\] 

%
%
We also need the following result, which is a version of \cite[Theorem 1]{MR1344897} that includes some assertions that appear only in the proof of that result; see also, \cite[Theorem 6.2.3]{MR2757285}.
\begin{theo}
\label{BergErem}
Suppose that $f$ is a {\tef} with an {\its} with projection $a\in\widehat{\mathbb{C}}$. Suppose that $a$ is not the limit of critical values of $f$. Then there exists a sequence of asymptotic values $(a_n)$, which converge to $a$, a sequence of disjoint unbounded simply connected domains $(U_n)$ such that $D_n~=~f(U_n)$ is a disc with $a_n\in\partial D_n$, and a sequence of asymptotic curves $(\Gamma_n)$ such that $\Gamma_n \subset U_n$, $f(\Gamma_n)$ is a radius of $D_n$ ending at $a_n$, and $f$ is univalent in $U_n$.
\end{theo}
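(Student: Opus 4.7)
The plan is to use Theorem \ref{Tcovering} to dissect the sheet-structure of $f^{-1}$ near the {\its} over $a$, and then to use Theorem \ref{Lindelof} to convert unbounded ends of that structure into asymptotic curves. Fix $r_0>0$ so small that $B(a,r_0)$ contains no critical values of $f$ (possible since $a$ is not a limit of critical values), and let $U=U(r_0)$ be the chosen neighbourhood of the {\its}. Indirectness implies that the set $E=U\cap f^{-1}(a)$ is infinite; its discreteness in $\mathbb{C}$, together with $\bigcap_{r>0}U(r)=\emptyset$, forces the points of $E$ to accumulate only at $\infty$.

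Consider $V=U\setminus E$. The restriction $f|_V\colon V\to B(a,r_0)\setminus\{a\}$ is an unbranched covering, so each component $W$ of $V$ falls under Theorem \ref{Tcovering} after composing with a conformal map $B(a,r_0)\setminus\{a\}\to\disc^*$. Either $W$ is simply connected with $f|_W$ a universal covering (a logarithmic tract over $a$), or $W$ is conformally $\disc^*$ with $f|_W$ an $m$-fold cover. In the second case the puncture of $W$ is either a finite point or the ideal point $\infty$; a finite puncture necessarily lies at some $w\in E$, and since there are no critical points in $U$ we must have $m=1$, so $W$ is a punctured neighbourhood of $w$ mapped conformally onto $B(a,r_0)\setminus\{a\}$. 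These "bounded" components only account for individual preimages of $a$, so for each $r_n\in(0,r_0)$ at least one component $W_n$ of $U(r_n)\setminus f^{-1}(a)$ must be unbounded; otherwise $U(r_n)$ would be a union of bounded punctured discs, contradicting that it is a neighbourhood of the {\its} at $\infty$.

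On such an unbounded component $W_n$, the conformal model of Theorem \ref{Tcovering} (either $\phi\colon W_n\to\hplane$ with $h\circ f=\exp\circ\phi$, or $\phi\colon W_n\to\disc^*$ with $h\circ f=\phi^m$) identifies $W_n$ with a simply connected topological disc in which the ideal point corresponds to $\infty$ in $\mathbb{C}$. Pick $a_n\in B(a,r_0)\setminus\{a\}$ with $|a_n-a|\le r_n$, and lift through $f|_{W_n}$ the radius of $B(a,r_0)$ that ends at $a_n$. Theorem \ref{Lindelof}, applied to $\phi$, shows that the lift terminates at this ideal point, producing an asymptotic curve $\Gamma_n\subset W_n$ with asymptotic value $a_n$. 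Taking $D_n\subset B(a,r_0)$ to be a sufficiently small disc with $a_n\in\partial D_n$, the component $U_n\subset W_n$ of $f^{-1}(D_n)$ containing an end of $\Gamma_n$ is unbounded, simply connected, and mapped univalently by $f$ onto $D_n$.

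The main obstacle will be extracting \emph{infinitely many pairwise disjoint} such tracts with $a_n\to a$. I would proceed inductively: at stage $n$, use the scale $r_n\to 0$ to produce a fresh unbounded tract $W_n$ as above, then choose $a_n$ in an angular sector about $a$ disjoint from those used in the earlier stages and with $|a_n-a|\le r_n$, and finally shrink $D_n$ enough so that $U_n$ is disjoint from $U_1,\ldots,U_{n-1}$. Indirectness is essential at this step, because it guarantees that new preimages of $a$ continue to enter each $U(r_n)$, so each scale really does supply a new unbounded tract rather than simply a restriction of an earlier one.
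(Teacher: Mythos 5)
Your argument breaks down at its central step: the claim that $f$ restricted to $V=U(r_0)\setminus f^{-1}(a)$ is an unbranched \emph{covering} of $B(a,r_0)\setminus\{a\}$. The absence of critical points only makes this restriction a local homeomorphism; the covering (path-lifting) property is precisely what fails at an indirect singularity, and its failure is the very source of the asymptotic values $a_n$ that the theorem asserts. In fact your claim is not merely unjustified but false in this setting: $f^{-1}(a)$ is discrete, so $V$ is a domain minus a closed discrete set and hence \emph{connected}, and by indirectness it has infinitely many punctures, so it is infinitely connected. If $f|_V$ were a covering of the punctured disc, Theorem~\ref{Tcovering} would force $V$ to be conformally equivalent to $\hplane$ or to $\disc^*$, i.e.\ simply or doubly connected --- a contradiction. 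Consequently the decomposition you describe (bounded punctured-disc components around points of $E$ plus unbounded tracts to which Theorem~\ref{Tcovering} applies) does not exist; $U(r_n)\setminus f^{-1}(a)$ has exactly one component. Moreover, even where a covering structure is present your mechanism cannot deliver the conclusion: lifting a radius that ends at a point $a_n\ne a$ of the base under a covering map terminates at an ordinary preimage of $a_n$ inside the tract, not at infinity, so no asymptotic value $a_n$ arises; an asymptotic curve over $a_n$ appears exactly when an inverse branch \emph{fails} to continue to $a_n$. At best case $(i)$ of Theorem~\ref{Tcovering} would give a logarithmic tract and the asymptotic value $a$ itself, which is not what is required (and whose existence does not follow anyway). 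Your closing paragraph locates the main difficulty in making the $U_n$ disjoint; the real difficulty comes earlier, in producing even one asymptotic value $a_n$ with $0<|a_n-a|$ small.

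For comparison: the paper does not prove this statement at all --- Theorem~\ref{BergErem} is quoted from \cite[Theorem 1]{MR1344897} (see also \cite[Theorem 6.2.3]{MR2757285}), with the extra assertions about $U_n$, $D_n$ and $\Gamma_n$ taken from the proof given there. That proof proceeds quite differently from your proposal: one picks $z_n\in U(r_n)$ with $f(z_n)=a$ and $r_n\to 0$, considers the branch $g_n$ of $f^{-1}$ with $g_n(a)=z_n$ and the largest disc $B(a,\rho_n)$ into which it continues analytically; since $B(a,r_0)$ contains no critical values, the obstruction is an asymptotic value $a_n$ with $|a_n-a|=\rho_n$, along which $g_n$ tends to infinity. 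A Koebe-distortion/normal-family argument, exploiting indirectness and the disjointness of the univalent images of distinct branches, shows $\rho_n\to 0$, so $a_n\to a$; then $D_n$ is taken to be a disc inside $B(a,\rho_n)$ internally tangent at $a_n$, $U_n=g_n(D_n)$, and $\Gamma_n$ the $g_n$-image of the radius of $D_n$ ending at $a_n$, which yields the univalence, disjointness and unboundedness statements simultaneously. If you want to reprove the theorem, it is this analytic-continuation argument, not a covering-space classification (nor Theorem~\ref{Lindelof}), that is needed.
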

%
%
Finally, we need the following lemma.
\begin{lemma}
\label{lem1}
Let $f$ be a {\tef}. Suppose that for every $R~>~0$ there exist $r>0$, $a_0\iscomplex$ with $|a_0|>R$, an asymptotic curve $\Gamma'$ with asymptotic value $a_0$, $W$ a simply connected neighbourhood of $\Gamma'$, and an analytic map $\phi$, univalent on $W$, such that $\phi(\Gamma')$ is an interval $(-\infty,x_0)$, and
\begin{equation}
\label{newfeq}
f(z) = re^{\phi(z)} + a_0, \qfor z\in W.
\end{equation}
Then $\eta_f = 0$.
\end{lemma}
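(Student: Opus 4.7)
The plan is to show, for each $R>0$, that $\inf_{z\in D_R}|zf'(z)/f(z)|=0$; since $\eta_f$ is the limit of these infima, this yields $\eta_f=0$. First I would apply the hypothesis with $2R$ in place of $R$, obtaining $r>0$, $a_0$ with $|a_0|>2R$, an asymptotic curve $\Gamma'$, a simply connected neighbourhood $W$, and a univalent $\phi$ on $W$ with $\phi(\Gamma')=(-\infty,x_0)$ and $f(z)=re^{\phi(z)}+a_0$ on $W$. Differentiating gives $f'(z)=r\phi'(z)e^{\phi(z)}$ and
\[
\frac{zf'(z)}{f(z)}=\frac{z\,r\phi'(z)\,e^{\phi(z)}}{re^{\phi(z)}+a_0},\qquad z\in W.
\]
Parametrise $\Gamma'$ by $\psi:=\phi^{-1}$, so that $\psi:(-\infty,x_0)\to\Gamma'$ is real-analytic and $|\psi(s)|\to\infty$ as $s\to-\infty$. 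Using $\phi'(\psi(s))=1/\psi'(s)$, and $|re^s+a_0|\geq|a_0|/2$ once $s$ is sufficiently negative, one obtains
\[
\left|\frac{zf'(z)}{f(z)}\right|_{z=\psi(s)}\leq\frac{2r}{|a_0|}\cdot\frac{|\psi(s)|\,e^{s}}{|\psi'(s)|}.
\]

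The crux will be to produce a sequence $s_n\to-\infty$ along which $|\psi(s_n)|e^{s_n}/|\psi'(s_n)|\to 0$. I would argue by contradiction using a Gronwall-type estimate: suppose there exist $s_1<x_0$ and $c>0$ such that $|\psi'(s)|\leq c^{-1}|\psi(s)|e^{s}$ for all $s\leq s_1$. Set $A:=|\psi(s_1)|$ and $g(s):=\int_{s}^{s_1}|\psi'(\sigma)|\,d\sigma$, so that $g'(s)=-|\psi'(s)|$, $g(s_1)=0$, and by the triangle inequality $|\psi(s)|\leq A+g(s)$. The standing assumption then reads $-g'(s)/(A+g(s))\leq e^{s}/c$, and integrating from $s$ to $s_1$ gives
\[
\log\frac{A+g(s)}{A}\leq\frac{e^{s_1}-e^{s}}{c}<\frac{e^{s_1}}{c}.
\]
Hence $g$, and therefore $|\psi|$, is bounded on $(-\infty,s_1]$, contradicting $|\psi(s)|\to\infty$.

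With such a sequence $(s_n)$, setting $z_n:=\psi(s_n)$ gives $f(z_n)=re^{s_n}+a_0\to a_0$, so $|f(z_n)|>R$ for all large $n$, while the estimate above forces $|z_nf'(z_n)/f(z_n)|\to 0$. Hence $\inf_{z\in D_R}|zf'(z)/f(z)|=0$ for every $R>0$, and therefore $\eta_f=0$. The main subtlety is the Gronwall bootstrap; once it is in place, everything else is direct computation. That step is where the integrability of $e^{s}$ on $(-\infty,s_1]$ converts the contradictory assumption $|\psi'(s)|\leq c^{-1}|\psi(s)|e^{s}$ into a uniform bound on the arclength $g(s)$, which in turn bounds $|\psi(s)|$ and contradicts $\Gamma'$ escaping to $\infty$.
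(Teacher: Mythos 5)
Your proof is correct, and the core estimate is the same as the paper's: parametrise $\Gamma'$ by $\psi=\phi^{-1}$ (called $h$ in the paper), differentiate \eqref{newfeq}, and observe that $|zf'(z)/f(z)|$ along the curve is comparable to $|\psi(s)|e^s/|\psi'(s)|$, so that a positive lower bound on the latter forces $\psi$ to be bounded, contradicting the fact that $\Gamma'$ escapes to infinity.

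Where you genuinely diverge is in the mechanism for the final contradiction. The paper assumes $\eta_f\neq 0$ at the outset, obtains $|h'(t)/h(t)|<2re^t/(\epsilon|a_0|)$, fixes a branch $L$ of the logarithm on $W$ (which requires the mild WLOG assumption $0\notin W$), and integrates $\bigl|\tfrac{d}{dt}L(h(t))\bigr|$ over $(-\infty,x_0)$: the left side is finite while the image $L(\Gamma')$ is unbounded, a contradiction. You instead run a Gronwall-type bootstrap on the arclength $g(s)=\int_s^{s_1}|\psi'|$, bounding $|\psi(s)|\le A+g(s)$ by the triangle inequality and concluding that $g$ (hence $|\psi|$) stays bounded whenever $|\psi'(s)|\le c^{-1}|\psi(s)|e^s$. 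This buys you two small things: it avoids choosing a branch of the logarithm (so you never need $0\notin W$), and it shows directly that $\inf_{D_R}|zf'(z)/f(z)|=0$ for each $R$, rather than contradicting a global lower bound. The cost is that the Gronwall step is slightly longer than the paper's one-line integration. Both versions are sound; the only nit is that you call $\psi$ ``real-analytic'' when it is a complex-valued analytic function of the real parameter $s$.
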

\begin{proof}
Suppose that $\eta_f \neq 0$. Then there exist $\epsilon, R>0$ such that
\begin{equation}
\label{sisnotzero}
\left|z\frac{f'(z)}{f(z)}\right| > \epsilon, \qfor |f(z)| > R.
\end{equation}
Choose $a_0$ such that $|a_0|> 2R$, let $h = \phi^{-1}$ and put $t=\phi(z)$. Then, as $z\rightarrow\infty$ along $\Gamma'$, by (\ref{newfeq}) and (\ref{sisnotzero}),
\begin{align}
\epsilon &< \left| z \frac{\phi'(z) r e^{\phi(z)}}{re^{\phi(z)} + a_0} \right| 
          = \left| h(t) \frac{\phi'(h(t)) re^{t}}{re^{t} + a_0} \right| 
          = \left| \frac{h(t) re^{t}}{h'(t) (re^{t} + a_0)} \right| 
          \sim \left| \frac{h(t) re^{t}}{h'(t) a_0} \right|.
\end{align}
Hence, for sufficiently large negative values of $t$,
\begin{equation}
\label{ee}
\left| \frac{h'(t)}{h(t)} \right| < \frac{2re^{t}}{\epsilon |a_0|}.
\end{equation}
Without loss of generality we can assume that (\ref{ee}) applies for all $t\in (-\infty,x_0)$. Since $W$ is simply connected, and since we can assume that $0\notin W$, we can define a branch of the logarithm, $L$, in $W$. Then, by (\ref{ee}),
\begin{equation}
\label{ez}
\left| \frac{d}{dt} L(h(t)) \right| < \frac{2re^{t}}{\epsilon |a_0|}.
\end{equation}

We set $\zeta = L(h(t))$ and integrate (\ref{ez}), to obtain
\begin{equation}
\label{ineq}
\frac{2r}{\epsilon |a_0|} \int_{-\infty}^{x_0} e^{t} \ dt > \int_{-\infty}^{x_0} \left|\frac{d}{dt} L(h(t))\right| \ dt  > \left| \int_{-\infty}^{x_0} \frac{d}{dt} L(h(t)) \ dt \right| = \left| \int_{L(\Gamma')} d\zeta \right|.
\end{equation}

Now, $L(\Gamma')$ is an unbounded curve, and so the right-hand side of (\ref{ineq}) is unbounded. However, the left-hand side of (\ref{ineq}) is bounded. This contradiction completes the proof.
\end{proof}

%
%
We now prove Theorem~\ref{theo1}.
\begin{proof}[Proof of Theorem~\ref{theo1}]
As mentioned in the introduction, it is clear that if $f\in\mathcal{B}$ then $\eta_f=\infty$. Suppose, then, that $\eta_f \ne 0$. It is immediate from (\ref{def_of_s}) that the set of critical values of $f$ is bounded. To complete the proof, we show that $f$ cannot have an unbounded set of finite asymptotic values, and so $f\in\mathcal{B}$, and hence $\eta_f = \infty$. To achieve this we show first that $f$ cannot have an unbounded set of projections of {\dltss}. We then show that $f$ cannot have an unbounded set of projections of {\itss}. Finally, we show that $f$ cannot have an unbounded set of projections of {\dnltss}. 

%
%
Our first claim then is that $f$ cannot have an unbounded set of projections of {\dltss}. Suppose that, for every $R>0$, $f$ has a {\dlts} with projection $a_0\iscomplex$, such that $|a_0|~>~R$. Noting that $a_0$ is finite, we apply Corollary~\ref{dltscorollary} to obtain a simply connected neighbourhood, $W=U(r)$, of the singularity, and a conformal map $\phi: W \to \hplane$ such that (\ref{newfeq}) holds for some $r>0$. Let $\Gamma$ be an asymptotic curve in $W$ associated with the {\dlts}.

Put $t=\phi(z)$ and let \[s = \sigma(t) = \frac{1-t}{1+t}.\] Then $\psi = \sigma \circ \phi$ is a conformal mapping of $W$ to $\disc$. Moreover $\psi(\Gamma)$ is a curve in $\disc$ tending to $-1$.

We now construct another curve to $\infty$ in $W$ which satisfies the hypotheses of Lemma~\ref{lem1}. Let $\Gamma' = \phi^{-1}((-\infty, 0))$. Then $\gamma = \psi(\Gamma')$ is a curve in $\disc$ tending to $-1$ within a Stolz angle. By Theorem~\ref{Lindelof}, $\psi^{-1}(s) \rightarrow \infty$ as $s\rightarrow -1$ along $\gamma$, and so $\Gamma'$ is an asymptotic curve. Moreover, $re^{t} + a_0 \rightarrow a_0$ as $t \rightarrow -\infty$ along $\phi(\Gamma')$, and so $\Gamma'$ has asymptotic value $a_0$. A contradiction follow by Lemma~\ref{lem1}, since we have assumed that $|a_0|>R$. This establishes our initial claim. 

%
%
We next show that $f$ cannot have an unbounded set of projections of {\itss}. Suppose that, for every $R>0$, $f$ has an {\its} with projection $a\iscomplex$, such that $|a|>2R$. By Theorem~\ref{BergErem}, $f$ has an asymptotic value $a_0$, with $|a_0|>R$, an asymptotic curve $\Gamma'$ associated with $a_0$, and an unbounded simply connected domain $W$ containing $\Gamma'$ such that $f$ is univalent in $W$. Moreover, $f(W)$ is a disc, $D$, with $a_0 \in \partial D$, and $f(\Gamma')$ is a radius in $D$ ending at $a_0$.

Without loss of generality, by composing with a rotation if necessary, assume that the centre of $D$ is at $a_0 + e^{x_0}$, for some $x_0 \isreal$. Define a branch of the logarithm, $L_1$, such that $\psi(w) = L_1(w - a_0)$ is a univalent map on $D$. Let $\phi$ be the univalent map $\phi = \psi \circ f$. Note that $\phi(\Gamma') = (-\infty, x_0)$, and (\ref{newfeq}) holds with $r=1$. A contradiction follows by Lemma~\ref{lem1}, since we have assumed that $|a_0|>R$. This establishes our second claim. 

%
%
Finally we show that the projections of {\dnltss} are bounded. This follows immediately from the fact that the projections of other types of {\tss} are bounded and from Theorem~\ref{theo2}. This completes the proof.
\end{proof}
%
%
\itshape Remark: \normalfont
It seems possible to generalise the result of Theorem~\ref{theo1} to transcendental meromorphic functions with direct tracts (see, for example, \cite{MR2439666} for more background on this concept). We have not done this here, for reasons of simplicity. However, the proof seems to work similarly, although a number of results used in this paper need to be generalised. In addition, we need to replace Theorem~\ref{Heins} with \cite[Corollary 1]{MR1703869}, and \cite[Lemma 1]{MR1196102} with \cite[Lemma 6.3]{MR2439666}. \\

%
%
\itshape Acknowledgment: \normalfont
The author is grateful to Phil Rippon and Gwyneth Stallard for all their help with this paper.
%
%
%
%
%
\bibliographystyle{acm}
\bibliography{Main}
\end{document}